\newcommand{\ol}[1]{{\overline{#1}}}
\newcommand\GreenL{\mathcal{L}}
\newcommand\GreenR{\mathcal{R}}
\newcommand\GreenH{\mathcal{H}}
\newcommand\GreenD{\mathcal{D}}
\newcommand\GreenJ{\mathcal{J}}
\newtheorem{theorem}{Theorem}[section]
\newtheorem{proposition}[theorem]{Proposition}
\newtheorem{question}[theorem]{Question}
\newtheorem{corollary}[theorem]{Corollary}
\newtheorem{example}[theorem]{Example}
\begin{document}
\title{A large class of sofic monoids}

\subjclass[2010]{20M10,43A07}

\maketitle

\begin{center}

MARK KAMBITES\footnote{Email \texttt{Mark.Kambites@manchester.ac.uk}.}

    \medskip

    School of Mathematics, \ University of Manchester, \\
    Manchester M13 9PL, \ England.

 \date{\today}

\end{center}

\begin{abstract}
We prove that a monoid is sofic, in the sense recently introduced by Ceccherini-Silberstein
and Coornaert, whenever the $\GreenJ$-class of the
identity is a sofic group, and the quotients of this group by orbit
stabilisers in the rest of the monoid are amenable. In particular, this
shows that the following are all sofic: cancellative monoids with amenable group of
units; monoids with sofic group of units and finitely many non-units; and
monoids with amenable Sch\"utzenberger groups and finitely many $\GreenL$-classes in each
$\GreenD$-class. This provides a very wide range of sofic monoids, subsuming
most known examples (with the notable exception of locally residually finite
monoids). We conclude by discussing some aspects of the definition, and posing
some questions for future research.
\end{abstract}

\section{Introduction}

\textit{Sofic groups} are a class of groups forming a common generalisation
of amenable groups and residually finite groups. They were introduced by
Gromov \cite{Gromov99}, applied to dynamical systems
by Weiss \cite{Weiss00} and subsequently studied by many authors. For a
detailed introduction, see for example \cite[Chapter 7]{Cecc10}.

Ceccherini-Silberstein and Coornaert \cite{Cecc13} have recently introduced
a definition of a \textit{sofic monoid} (see Section~\ref{sec_soficmonoid} below).
Examples of sofic monoids include sofic groups, finite (or more generally,
locally residually finite) monoids, commutative monoids, free monoids, and
cancellative left or right amenable monoids. The class of sofic monoids is closed
under operations including direct products, inductive and projective limits
and the taking of submonoids.
A notable example of a \textit{non}-sofic monoid is the bicyclic monoid;

It follows from their work and standard results in semigroup theory (see
Proposition~\ref{sec_soficproof} below for the deduction)
that the $\GreenJ$-class of the identity in a sofic monoid must be a single
group, and that this group must be sofic. Another of their results
\cite[Proposition~4.7]{Cecc13}, which
may be viewed as a partial converse, says that a monoid is sofic provided
it has no non-trivial left or right units, which in our language means
provided the $\GreenJ$-class of the identity is a trivial group.

Our main result here (Theorem~\ref{thm_main} below) can be
viewed as a very strong generalisation of this latter statement,
providing a broad class of sofic monoids which subsumes most
of the known examples listed above (with the notable exception of
locally residually finite monoids). Specifically, we
show that a monoid in which the $\GreenJ$-class of the identity is a
sofic group will be sofic, provided the quotients of this group by the
stabilisers of right translation orbits in the rest of the monoid are
amenable groups. Although the latter condition, which we term \textit{local
amenability of the action}, is rather technical in full generality, it
applies trivially in many cases of interest, such as when the group is
amenable, or the rest of the monoid is finite, or the non-unit
$\GreenR$-classes are finite. 

In Section~\ref{sec_schutz} we consider how the local amenability condition
interacts with a standard semigroup-theoretic method of using Green's
relations to structurally decompose the action of the group of units of
the monoid into two parts. We show that the local amenability condition
can be deduced from conditions on the Sch\"utzenberger groups and on the
action of the group of units on the set of $\GreenH$-classes. Consequences
include the fact that a monoid will be sofic provided the $\GreenJ$-class
of the identity is a sofic group, and each non-unit $\GreenD$-class has either
finitely many $\GreenL$-classes, or finite or abelian Sch\"utzenberger
groups.

We conclude, in Section~\ref{sec_remarks}, with some open questions
arising naturally from our results, and some discussion of aspects
of the definition of a sofic monoid.

\section{Sofic Monoids, Green's Relations and Amenability}\label{sec_soficmonoid}

In this section we briefly recall some required definitions from classical
semigroup theory \cite{Clifford61} and from the new theory of sofic monoids
\cite{Cecc13}, as well as proving a preliminary result connecting them.
We also recall the notion of \textit{amenability} for groups,
which will be required in later sections.

\subsection{Sofic Monoids} Let $M$ be a monoid with identity element $1$.
We say that $M$ is \textit{(left) sofic} if for every finite subset $K$ of $M$
and $\epsilon > 0$ there exists a finite set $X$ and a map
$$M \times X \to X, \ (m,x) \to m \cdot x$$
such that
\begin{itemize}
\item $1 \cdot x = x$ for all $x \in X$;
\item for every $g,h \in K$, the proportion of elements $x$ in $X$ such
that \\ $g \cdot (h \cdot x) = (gh) \cdot x$ is at least $1-\epsilon$; and
\item for every $g,h \in K$ with $g \neq h$, the proportion of elements
$x$ in $X$ such that $g \cdot x = h \cdot x$ is at most $\epsilon$.
\end{itemize}
We call a map satisfying these conditions a \textit{(left) $(K,\epsilon)$-action}
of $G$ on $X$.

Sofic monoids were introduced by Ceccherini-Silberstein and Coornaert
\cite{Cecc13}, with a slightly different
formal definition using a notion of a $(K,\epsilon)$-morphism to a full
transformation monoid; our formulation is trivially equivalent, being
essentially just a different notation which makes
our proofs slightly more concise. For a more leisurely introduction,
including a discussion of the basic properties of sofic monoids, see
\cite{Cecc13}.

There is an obvious dual notion of a \textit{right 
$(K,\epsilon)$-action}, which leads to a definition of a
\textit{right sofic monoid}; see \cite[Section~7]{Cecc13} for discussion this distinction. In
this paper we consider explicitly only left sofic monoids (which, for
conciseness and following
\cite{Cecc13}, we simply term ``sofic''), but of course our results have
dual statements for right sofic monois.

\subsection{Green's Relations.}
\textit{Green's relations} are five binary relations which can be defined
on any monoid $M$ as follows:
\begin{itemize}
\item $x \GreenL y$ if $Mx = My$;
\item $x \GreenR y$ if $xM = yM$;
\item $x \GreenH y$ if $Mx = My$ and $xM = yM$;
\item $x \GreenJ y$ if $MxM = MyM$; and
\item $x \GreenD y$ if there exists $z \in M$ with $x \GreenL z$ and $z \GreenR y$.
\end{itemize}
All five are equivalence relations on $M$; this is trivial in the first four
cases and requires slightly more work to show in the case of $\GreenD$. The
relations encapsulate the (left, right and two-sided) principal ideal
structure of the monoid, and play a key role in most areas of semigroup theory.
For a detailed introduction see for example \cite[Chapter 2]{Clifford61}.

\subsection{The $\GreenJ$-class of $1$ in a Sofic Monoid}

The following is a straightforward consequence of the results of
\cite{Cecc13} together with some foundational results of semigroup theory.

\begin{proposition}\label{prop_groupofunits}
Let $M$ be a sofic monoid. Then the $\GreenJ$-class of the identity is
equal to the group of units of $M$, and this is a sofic group.
\end{proposition}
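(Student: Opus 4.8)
The plan is to establish the structural statement first — that the $\GreenJ$-class of $1$ is the group of units — and then deduce soficity of this group from the closure properties recalled above. Recall the foundational fact that in any monoid the $\GreenH$-class of $1$ coincides with the group of units $U(M)$, since an element of this class has both a left and a right inverse and these must agree. As $\GreenH \subseteq \GreenJ$, the group of units is always contained in the $\GreenJ$-class of $1$, so the first assertion reduces to the reverse inclusion: every element $\GreenJ$-related to $1$ is a unit.

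The bridge to this algebraic statement is \emph{Dedekind finiteness}, the property that $ab = 1$ implies $ba = 1$, which I would first prove holds in any sofic monoid. Fix a finite $(K,\epsilon)$-action on a set $X$ with $K \supseteq \{1,a,b,ba\}$ and $\epsilon$ small, and write $\hat a, \hat b$ for the transformations $x \mapsto a \cdot x$ and $x \mapsto b \cdot x$. Associativity for the pair $(a,b)$ together with $1 \cdot x = x$ gives $\hat a(\hat b(x)) = (ab)\cdot x = x$ for a $(1-\epsilon)$-proportion of $x \in X$; on a finite set this forces $\hat b$ to be injective, and hence bijective, off a small set, with $\hat a$ agreeing with its inverse there. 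Consequently $\hat b(\hat a(x)) = x$ for most $x$, and combining with associativity for the pair $(b,a)$ yields $(ba)\cdot x = x = 1 \cdot x$ for a large proportion of $x$. When $ba \neq 1$ this contradicts the separation condition once $\epsilon$ is sufficiently small, so $ba = 1$. I expect this to be the main obstacle: converting the approximate one-sided inversion $\hat a \hat b \approx \mathrm{id}$ into approximate two-sided inversion requires a careful pigeonhole estimate on $X$, though it is routine in the sofic setting and could alternatively be quoted from \cite{Cecc13}.

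Granting Dedekind finiteness, the reverse inclusion is pure semigroup theory: if $a \GreenJ 1$ then $MaM = M$, so $uav = 1$ for some $u,v \in M$; reading this as $u(av) = 1$ and applying Dedekind finiteness gives $(av)u = 1$, so $av$ is a unit, whence $a\bigl(v(av)^{-1}\bigr) = 1$ exhibits a right inverse for $a$, and a final application of Dedekind finiteness shows $a$ is a unit. Thus the $\GreenJ$-class of $1$ equals $U(M)$, a group. Finally, $U(M)$ is a submonoid of $M$, so by closure of the sofic class under submonoids it is a sofic monoid; and since a group that is sofic as a monoid is sofic as a group (by \cite{Cecc13}), $U(M)$ is a sofic group, completing both assertions.
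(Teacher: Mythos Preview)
Your argument is correct. The pigeonhole step is sound: if $Y=\{x:\hat a(\hat b(x))=x\}$ has size at least $(1-\epsilon)|X|$, then $\hat b$ is injective on $Y$, so $Y'=\hat b(Y)$ also has size at least $(1-\epsilon)|X|$ and every $y\in Y'$ satisfies $\hat b(\hat a(y))=y$; intersecting with the associativity condition for the pair $(b,a)$ forces $(ba)\cdot x = 1\cdot x$ on a set of density at least $1-2\epsilon$, contradicting separation for $\epsilon<1/3$ unless $ba=1$. The subsequent algebra deducing that any $a\GreenJ 1$ is a unit from Dedekind finiteness, and the final appeal to closure under submonoids, are both fine.

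The paper takes a different route to the same structural fact. Rather than prove Dedekind finiteness directly, it argues that if some $j\GreenJ 1$ fails to be a unit then one can extract a one-sided unit $x$ with $xy=1$ but $yx\neq 1$, and then shows $x,y$ generate a submonoid isomorphic to the bicyclic monoid (using the classical fact \cite[Corollary~1.32]{Clifford61} that every proper quotient of the bicyclic monoid is a cyclic group). Non-soficity of the bicyclic monoid \cite[Theorem~5.1]{Cecc13} and closure under submonoids \cite[Proposition~3.5]{Cecc13} then give the contradiction. Your approach is more self-contained --- it avoids the structure theory of quotients of the bicyclic monoid and the separate citation for its non-soficity, essentially internalising the relevant pigeonhole argument --- while the paper's version is more modular, packaging the obstruction as ``the bicyclic monoid embeds'', which is the form reused later (e.g.\ in the proofs of Corollaries~\ref{cor_cancellative} and the corollary after Theorem~\ref{thm_schutz}).
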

\begin{proof}
Suppose for a contradiction that $j$ is in the $\GreenJ$-class of the
identity but is not a unit. Then $1 = ajb$ for some elements $a,b \in M$.
Now if $a$ is not a unit, then it is a left unit which is not a unit. On
the other hand, if $a$ is a unit then conjugating both sides by $a$ we
obtain $1 = jba$, in which case $j$ is a left unit which is not a unit.
So in all cases there are elements $x, y \in M$ such that $xy = 1$ but $x$
is not a unit.

We claim that $x$ and $y$ generate a submonoid of $M$ isomorphic to the 
bicyclic monoid. Indeed, if not, then since they satisfy the defining
relation $xy = 1$, they must generate a proper quotient of a bicyclic
monoid. But the only proper quotients of the bicyclic monoid are cyclic
groups \cite[Corollary~1.32]{Clifford61} and if they generated a group we
would also have $yx= 1$, contradicting the fact that $x$ is not a unit.

Thus, $M$ contains a copy of the bicyclic monoid. By \cite[Proposition~3.5]{Cecc13},
submonoids
of sofic monoids are sofic, but by \cite[Theorem 5.1]{Cecc13} the
bicyclic monoid is not sofic, so this gives the required contradiction.

Finally, the group of units is in particular a submonoid of $M$, so
is sofic as a monoid by \cite[Proposition~3.5]{Cecc13} and hence also as a
group by \cite[Proposition 2.4]{Cecc13}.
\end{proof}

\subsection{Amenability} For our main theorem we shall need the notion
of an \textit{amenable group}. Recall that a group is
called \textit{amenable} if
it admits a finitely additive probability measure which is invariant under
the (left or right) translation action of the group.
Most of the time we shall not make direct use of the definition of
amenability, but rather of a well-known combinatorial property
of them. A group has the \textit{F\o lner set property} if
for every finite subset $K$ of $G$ and every $\epsilon > 0$
there exists a finite subset $F$ of $G$, such that the proportion of elements
$f \in F$ satisfying $Kf \subseteq F$ is at least $1 - \epsilon$. In fact, a
group is amenable if and only if it has the F\o lner set property \cite{Folner55}.

For a full introduction to amenable groups,
we refer the reader to, for example, \cite[Chapter 4]{Cecc10}. (The
F\o lner set property has a number of slightly different but equivalent
statements; the exact formulation we use is taken from \cite[Section~2]{Weiss00} and
is not explicitly mentioned in \cite{Folner55} or \cite{Cecc10}, but it is an easy exercise
to deduce its equivalence to the properties defined in \cite[Section~4.7]{Cecc10}
and shown to be equivalent to the standard definition of amenability in
\cite[Section~4.9]{Cecc10}.)

\section{Main Theorem}\label{sec_soficproof}

Let $G$ be a group acting on a set $X$. We say that the \textit{action of 
$G$ on $X$ is locally amenable} if for every orbit in $X$, the quotient of 
$G$ by the pointwise stabiliser of the orbit is an amenable group. Note 
that, because quotients of amenable groups are amenable 
\cite[Proposition~4.5.4]{Cecc10}, every action of an amenable group is 
locally amenable. In fact it is easy to show that a group is amenable
exactly if its translation action on itself (from either side) is locally
amenable.

We are now ready to state our main theorem, which gives a very general
sufficient condition for a monoid to be sofic.

\begin{theorem}\label{thm_main}
Let $M$ be a monoid such that the $\GreenJ$-class of the identity is a
sofic group $G$, and the right translation action of $G$ on $M\setminus G$
is locally amenable. Then $M$ is sofic.
\end{theorem}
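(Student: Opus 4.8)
The plan is to combine an approximate action witnessing soficity of $G$ (the ``group direction'') with F\o lner sets for the amenable quotients (the ``coset direction''), glued over the finitely many right-translation orbits that are relevant to a given finite $K$. Fix a finite $K \subseteq M$ and $\epsilon > 0$. Since the defining conditions of a $(K,\epsilon)$-action only constrain elements of $K$ and their two-fold products, I first observe that only finitely many $G$-orbits of $M \setminus G$ are relevant: namely the orbits meeting $K$, together with the orbits of the products $gh$ with $g,h \in K$. Writing these as $O_1,\dots,O_r$ and choosing a base point $o_i$ in each, let $H_i$ be the stabiliser of $o_i$ under the right translation action, so that $O_i$ is equivariantly identified with the coset space $H_i \backslash G$, and let $N_i$ be the pointwise stabiliser of $O_i$ (the normal core of $H_i$); by hypothesis each $Q_i = G/N_i$ is amenable. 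The key structural point is that left multiplication by $M$ commutes with the right $G$-action, so it permutes the orbits and, read through the base points, is described on each $O_i$ by honest multiplication in $M$ together with an $H_i$-valued correction.

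Next I assemble the finite set $X$. Using soficity of $G$, I fix a finite set $Y$ carrying a $(K_0,\delta)$-action of $G$, where the finite set $K_0 \subseteq G$ and the tolerance $\delta$ are chosen large enough, respectively small enough, to control all the units that arise: those in $K$ and those appearing when the induced maps between orbits are expressed through the base points. Using the F\o lner set property of each amenable $Q_i$, I fix finite subsets $F_i$ of $Q_i$, equivalently nearly invariant finite subsets of the orbits $O_i$, that are almost invariant under the finitely many translations coming from $K$. The set $X$ is then built by fibering the sofic model $Y$ over these F\o lner approximations, one piece for the group part and, for each orbit, a copy coordinatised by $F_i$, together with a single adjoined garbage point. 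The left action of $M$ on $X$ is defined by transporting honest left multiplication through the chosen coset sections, using the $(K_0,\delta)$-action of $G$ in the group direction and translation inside the $F_i$ in the coset direction, and sending to the garbage point anything that would leave $X$.

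It then remains to verify the three conditions. The identity condition is immediate. The approximate associativity condition $g \cdot (h \cdot x) = (gh)\cdot x$ holds for all but an $\epsilon$-proportion of $x$ because multiplication in $M$ is genuinely associative: the only failures occur when an intermediate product leaves $X$, and these are confined to the F\o lner boundary (controlled by near invariance) together with the exceptional set of the $(K_0,\delta)$-action (controlled by $\delta$). For the separation condition, two distinct elements of $K$ that already differ in the group direction are separated for most $x$ by soficity of $G$; the delicate case is that of units which agree along an entire coset direction, and here the point is that passing to the amenable quotient $Q_i$ and averaging over a F\o lner set keeps the colliding proportion below $\epsilon$.

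The step I expect to be the main obstacle is exactly this last interface: defining the action through coset sections so that the $H_i$-valued cocycles are absorbed by the F\o lner sets of the amenable quotients, while simultaneously keeping the group direction faithful via the sofic model, so that approximate associativity and separation hold with a common tolerance. Making the choices of $K_0$, $\delta$ and the F\o lner sets $F_i$ mutually compatible across the finitely many orbits and the maps between them, so that a single $X$ works for all pairs from $K$ at once, is the part that requires genuine care; everything else reduces to bookkeeping once this coherent gluing of the (possibly non-amenable) sofic approximation and the amenable F\o lner approximation has been set up correctly.
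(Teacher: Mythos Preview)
Your plan is essentially the paper's strategy: combine a sofic approximation of $G$ with F\o lner data for the amenable quotients, glue over the finitely many orbits meeting $K \cup K^2$, and send overflow to a garbage point. Two simplifications in the paper dissolve the ``main obstacle'' you anticipate.

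First, instead of separate F\o lner sets $F_i \subseteq Q_i$, the paper maps $G$ into the finite direct product $\prod_k G/N_k$, calls the image $\ol{G}$ (amenable as a subgroup of a finite product of amenable groups), and takes a \emph{single} F\o lner set $F \subseteq \ol{G}$. This synchronises all orbit directions at once and removes the need to make the $F_i$ mutually compatible.

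Second, the paper never introduces coset sections or $H_i$-valued cocycles. When a non-unit $m$ acts on a triple $(z,f,p)$, the result is simply the \emph{actual monoid element} $m * f \in S$. The landing zone is the finite set $Y = (K\cap S)\cup[(K\cap S)*F]\cup[(K^2\cap S)*F] \subseteq S$, on which any subsequent action is the honest product in $M$ (or $\bot$ if it leaves $Y$). Associativity for cases involving a non-unit then reduces to associativity in $M$ itself, with nothing to absorb. In particular your enlarged $K_0$ is unnecessary: the sofic model only needs to be a $(K\cap G,\delta)$-action.

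Finally, your ``delicate case'' for separation does not arise. Distinct units $s,t \in K\cap G$ are separated purely by the sofic coordinate $p$, regardless of whether $\ol{s}=\ol{t}$ in $\ol{G}$; distinct non-units are separated because $*f$ is injective; and a unit versus a non-unit land in disjoint parts of $X$. No F\o lner averaging is needed for separation.
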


See Section~\ref{sec_remarks} below for a discussion of the extent to which
the hypotheses of Theorem~\ref{thm_main} are necessary, as well as sufficient,
conditions for soficity.

The proof is based on a fairly elementary, but quite technical, combinatorial
construction, partly inspired by the proof of \cite[Proposition~4.7]{Cecc13}:

\begin{proof}
Let $K$ be a finite subset of $M$ and $\epsilon > 0$. Let $G$ be the group
of units of $M$, and $S = M \setminus G$ the set of non-units. Note that
since $G$ is the entire $\GreenJ$-class of $1$, $S$ is an ideal of $M$. In
particular, $SG \subseteq S$, so we may consider the action by right translation
of $G$ on $S$.

For
each $k \in (K \cup K^2) \cap S$, let $H_k \mathrel{\unlhd} G$ be the pointwise stabiliser
of the orbit of
$k$, and let $G_k = G / H_k$.
Consider the induced morphism from $G$ to the direct product of the $G_k$'s.
Let $\ol{G}$ be the image of this map. For each
$g \in G$ write $\ol{g}$ for its image in $\ol{G}$.

Note that the kernel of this map
is the intersection of the subgroups $H_k$ for $k \in K \cap S$.
It follows that the right translation action of $G$ on $S$ induces a
well-defined right
action of $\ol{G}$ each orbit of an element of $(K \cup K^2) \cap S$. Indeed,
if $g,h \in G$ are such that $\ol{g} = \ol{h}$ and $s$ is in the orbit of
$k \in (K \cup K^2) \cap S$
then $g h^{-1} \in H_k$ which by definition means $sgh^{-1} = s$, so
$sg = sh$. For clarity we denote this action by $*$, so
$s * \ol{g} = sg$.

Let
$$\ol{K} \ = \ \lbrace \ol{k} \mid k \in K \cap G \rbrace \ \subseteq \ \ol{G}.$$
Choose any $\delta > 0$ sufficiently small that $(1-\delta)^3 > 1-\epsilon$.
By assumption the groups $G_k$ are all amenable; the group $\ol{G}$, being
a subgroup of a finite direct product of them, is therefore also amenable
by \cite[Proposition~4.5.1 and Corollary 4.5.6]{Cecc10}. Thus,
we may choose a finite subset $F \subseteq \ol{G}$ such that the proportion of elements
$f \in F$ satisfying $\ol{K} f \subseteq F$ exceeds $1-\delta$.

Also, recalling that $G$ is by assumption sofic, we may choose a 
finite set $P$ and a $(K \cap G, \delta)$-action of $G$ on $P$.

Now let $Z$ be a large finite set, let
$$Y \ = \ (K \cap S) \ \cup \ [(K \cap S) * F] \ \cup \ [(K^2 \cap S) * F] \ \subseteq \ S$$
and define
$$X = Y \cup (Z \times F \times P) \cup \lbrace \bot \rbrace$$
where $\bot$ is a new symbol not in any of the previous sets.

Let
$$H = \lbrace (z, f, p) \in Z \times F \times P \mid \ol{K} f \subseteq F \rbrace.$$
Clearly by choosing $Z$ large enough, we can ensure that the proportion of
elements of $X$
which come from $Z \times F \times P$ exceeds $1-\delta$, while by the definition of
$F$ the proportion of elements of $Z \times F \times P$ which lie in $H$ also exceeds
$1-\delta$. Thus, the proportion of elements of $X$ which lie in $H$
exceeds $(1-\delta)^2$.

We define a map
$$M \times X \to X, \ (m,x) \mapsto m \cdot x$$
as follows:
\begin{itemize}
\item $m \cdot n = \begin{cases} \textrm{the $M$-product } mn &\textrm{if } n \in Y \textrm{ and } mn \in Y \\
                                 \bot &\textrm{if } n \in Y \textrm{ and } mn \notin Y.
                   \end{cases}$
\item $m \cdot (z,f,p) = \begin{cases}
                           (z,\ol{m}f,m \cdot p) &\textrm{if } m \in G \textrm{ and } \ol{m} f \in F \\
                           m * f &\textrm{if } m * f \textrm{ is defined} \\
                                 \bot &\textrm{otherwise.}
                       \end{cases}$
\item $m \cdot \bot = \bot$.
\end{itemize}
(Recall that $m * f$ is defined precisely when $m$ lies in the orbit of an
element of $(K \cup K^2) \cap S$ under the right translation action of $G$.)

Our aim is to show that this map is a $(K,\epsilon)$-action of $M$ on
$X$.  We begin by recording an elementary consequence of the definition, for
ease of reference later in the proof:
\begin{itemize}
\item[(F)] If $m \in K \cap G$, then for any $(z,f,p) \in H$ we have
$\ol{m}f \in F$ (by the definition of $H$), and hence
$m \cdot (z,f,p) = (z,\ol{m}f,m \cdot p)$.
\end{itemize}

Next, note that $1 . x = x$ for all $x \in X$; indeed, we have
\begin{itemize}
\item $1 \cdot n = 1n = n$ for all $n \in Y$;
\item $1 \cdot (z,f,p) = (z, \ol{1} f, 1 \cdot p) = (z,f,p)$ for all $(z,f,p) \in Z \times F \times P$; and
\item $1 \cdot \bot = \bot$.
\end{itemize}

Now suppose $s,t \in K$. We claim first that the proportion of elements
in $(z,f,p) \in H$ which satisfy
$$s \cdot [t \cdot (z,f,p)] = (st) \cdot (z,f,p).$$
is at least $1-\delta$.
We prove the claim by analysing a number of cases, depending on whether
$s$ and $t$ are in $G$:
\begin{itemize}
\item If $s, t \in G$ then also $st \in G$. Now since we are using a $(K, \delta)$-action
of $G$ on $P$, the proportion of elements in $P$ satisfying
$s \cdot (t \cdot p) = (st) \cdot p$ is at least $1-\delta$. Since $H$
is defined as a subset of $Z \times F \times P$ by placing a restriction
only on $F$, it follows that the proportion of elements $(z,f,p) \in H$
such that $s \cdot (t \cdot p) = (st) \cdot p$ is also at least $1-\delta$.
For these elements, using fact (F) gives
$$s \cdot [t \cdot (z,f,p)] = s \cdot (z,\ol{t} f, t \cdot p).$$
Now if $(\ol{st})f = \ol{s}(\ol{t}f) \in F$ then we have 
$$s \cdot (z,\ol{t} f, t \cdot p) = (z,\ol{st} f, s \cdot (t \cdot p)) = (z,\ol{st} f, (st) \cdot p) = (st) \cdot (z,f,p),$$
while if $(\ol{st})f = \ol{s}(\ol{t}f) \notin F$ then we have 
$$s \cdot (z,\ol{t} f, t \cdot p) = \bot = (st) \cdot (z,f,p),$$
in both cases establishing the required equation.

\item If $s \notin G$ and $t \in G$ then first note that we have $st \notin G$,
since $S = M \setminus G$ is an ideal. Hence, $st \in K^2 \cap S$ which means
$(st)*f$ is defined. Also, $s \in K \cap S$, so $s*(\ol{t} f)$ is also defined.
Moreover, if we write $f = \ol{g}$ for some
$g \in G$ then we have
$$(st)*f = (st)g = s(tg) = s * \ol{tg} = s * (\ol{t}f).$$
Thus,
\begin{align*}
s \cdot [t \cdot (z,f,p)] &= s \cdot (z,\ol{t}f, t \cdot p) \ \ \ \ &\textrm{ by (F), since $t \in K \cap G, (z,f,p) \in H$} \\
                          &= s * (\ol{t}f) &\textrm{ by definition, since $s \in K \cap S$} \\
                          &= (s t) * f &\textrm{ by the preceding argument} \\
                          &= (s t) \cdot (z,f,p) &\textrm{ by definition.}
\end{align*}
so all elements of $H$ satisfy the required equation.

\item Finally, if $t \notin G$ (whether or not $s \in G$) we again have
$st \notin G$. As in the previous case, we have $(st)*f$ defined, and
this time $t*f$ is also defined. Again writing $f = \ol{g}$ for some
$g \in G$, we have
$$s \cdot (t * f) = s(t*f) = s(t \ol{g}) = (st) \ol{g} = (st) *f,$$
and hence,
\begin{align*} 
s \cdot [t \cdot (z,f,p)] &= s \cdot (t*f) \ \ \ \  &\textrm{by definition} \\
                          &= (st) * f  &\textrm{by the preceding argument} \\
                          &= (st) \cdot (z,f,p)    &\textrm{by definition}
\end{align*}
so again all elements of $H$ satisfy the required equation.
\end{itemize}
This completes the proof of the first claim.

Next let $s, t \in K$ with $s \neq t$. We claim that the proportion
of elements $(z,f,p)$ in $H$ satisfying $s \cdot (z,f,p) \neq t \cdot (z,f,p)$
is at least $1-\delta$.
Again, we consider a number of cases:
\begin{itemize}
\item  If $s,t \in G$ then, again using the $(K \cap G, \delta)$-action
of $G$ on $P$, the proportion of elements $p \in P$ such that $s \cdot p \neq t \cdot p$
is at least $1-\delta$ and it again follows that the proportion of elements
in $(z,f,p) \in H$ for which $s \cdot p \neq t \cdot p$ is at least
$1-\delta$. For such elements, by fact (F), we have
$$s \cdot (z,f,p) \ = \ (z,\ol{s}f,s \cdot p) \ \neq \ (s,\ol{t}f, t \cdot p) \ = \ t \cdot (z,f,p).$$

\item If exactly one of $s$ and $t$ lies in $G$ then using fact (F) again,
exactly one of $s \cdot (z,f,p)$ and $t \cdot (z,f,p)$ lies in $Z \times F \times P$, so
they cannot be equal for any $(z,f,p) \in H$.

\item If $s,t \notin G$ then for any $(z,f,p) \in H$, by the definition of $\cdot$ we
have we have $s \cdot (z,f,p) = s*f$ and $t \cdot (z,f,p) = t*f$. These cannot be equal,
or we would have $s = (s*f) * f^{-1} = (t*f) * f^{-1} = t$ where $f^{-1}$ is
the inverse of $f$ in the group $\ol{G}$.
\end{itemize}
This completes the proof of the second claim.

Now since the proportion of elements in $X$ which lie in $H$ exceeds
$(1-\delta)^2$, for any $s,t \in K$ the proportion of elements $x \in X$ satisfying
$s \cdot (t \cdot x) = (st) \cdot x$ is at least $(1-\delta)^3$,
which by the definition of $\delta$ is at least $1-\epsilon$. Similarly, for any $s,t \in K$ with
$s \neq t$, the proportion of elements of $X$ satisfying $s \cdot x \neq t \cdot x$
is at least $1-\epsilon$. Thus, we have defined a $(K, \epsilon)$-action of
$M$ on the finite set $X$, and so $M$ is sofic.
\end{proof}

\begin{corollary}\label{cor_conditions}
Let $M$ be a monoid such that the $\GreenJ$-class of the identity is a
sofic group $G$. If any of the following conditions hold, then $M$ is sofic:
\begin{itemize}
\item[(i)] $G$ is amenable;
\item[(ii)] $M \setminus G$ is finite; or
\item[(iii)] $M$ has finite $\GreenR$-classes outside the group of units;
\end{itemize}
\end{corollary}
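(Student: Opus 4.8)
The plan is to obtain each of (i), (ii) and (iii) as an instance of Theorem~\ref{thm_main}, by verifying in every case that the right translation action of $G$ on $M \setminus G$ is locally amenable; the conclusion that $M$ is sofic then follows at once. Since $G$ is already assumed to be a sofic group equal to the $\GreenJ$-class of the identity, the only thing requiring attention is the local amenability hypothesis. Case (i) then requires no work beyond a remark made in the discussion preceding the theorem: because quotients of amenable groups are amenable, every action of an amenable group is locally amenable, so if $G$ is amenable the hypothesis is automatically satisfied.

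For (ii) and (iii) I would first isolate the following elementary fact. Let $O$ be an orbit of the right translation action of $G$ on $M \setminus G$, and let $H \unlhd G$ be its pointwise stabiliser. Then $H$ is precisely the kernel of the homomorphism $G \to \mathrm{Sym}(O)$ describing the induced action on $O$, so $G/H$ embeds in $\mathrm{Sym}(O)$. If $O$ is finite then $\mathrm{Sym}(O)$ is finite, whence $G/H$ is finite and therefore amenable. Thus it suffices, in each of (ii) and (iii), to show that every orbit in $M \setminus G$ is finite.

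In case (ii) this is immediate, since each orbit is a subset of the finite set $M \setminus G$. In case (iii) I would observe that every orbit lies inside a single $\GreenR$-class: for $s \in M \setminus G$ and $g \in G$, the elements $s$ and $sg$ generate the same right ideal, since $s = (sg)g^{-1}$, and hence $s \GreenR sg$. Consequently the orbit of $s$ is contained in the $\GreenR$-class of $s$, which is an $\GreenR$-class outside the group of units and so finite by hypothesis. In both cases every orbit is finite, local amenability follows from the fact above, and Theorem~\ref{thm_main} applies. The proof is essentially a matter of translating each hypothesis into the language of the theorem; the only step that is not completely formal is the identification in (iii) of orbits as subsets of $\GreenR$-classes, and it is there that I expect the (very mild) main obstacle to lie.
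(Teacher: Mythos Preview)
Your proposal is correct and follows essentially the same approach as the paper: in each case you verify local amenability of the right translation action of $G$ on $M\setminus G$ and then invoke Theorem~\ref{thm_main}, with (ii) and (iii) reduced to the observation that finite orbits yield finite (hence amenable) quotients. The paper's proof is identical in structure, merely phrasing the finite-quotient step as ``pointwise stabilisers have finite index'' rather than via the embedding into $\mathrm{Sym}(O)$, and leaving the containment of orbits in $\GreenR$-classes as ``easily seen'' where you spell it out.
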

\begin{proof}
\ 
\begin{itemize}
\item[(i)] If $G$ is amenable then all of its quotients are amenable
\cite[Proposition~4.5.4]{Cecc10}, so the action is locally amenable and the theorem applies.
\item[(ii)] If $M \setminus G$ is finite then orbits under the translation
action are finite, so their pointwise stabilisers must have finite index.
Thus the relevant quotients are finite, and hence by
\cite[Proposition~4.4.6]{Cecc10} amenable, so the action is locally amenable
and the theorem applies.
\item[(iii)] It is easily seen that each orbit under the right translation
action is contained in an $\GreenR$-class, so if the latter are all finite
then orbits must be finite and the same argument as in case (ii) applies.
\end{itemize}
\end{proof}

We also have the following corollary:
\begin{corollary}\label{cor_cancellative}
Every left or right cancellative monoid with amenable group of units is sofic.
\end{corollary}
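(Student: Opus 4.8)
The plan is to derive this from Corollary~\ref{cor_conditions}(i). That corollary requires two facts about $M$: that the $\GreenJ$-class of the identity is exactly the group of units $G$, and that $G$ is a sofic group. The second is immediate, since amenable groups are sofic (they form one of the two classes whose common generalisation the sofic groups were introduced to be). So the substance of the argument lies in identifying the $\GreenJ$-class of $1$.

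First I would observe that one inclusion is trivial: every unit is $\GreenJ$-related to $1$, since $MgM = M$ whenever $g$ is invertible, so the group of units is always contained in the $\GreenJ$-class of the identity. The work is in the reverse inclusion, and for this I would reuse the purely algebraic part of the proof of Proposition~\ref{prop_groupofunits}. That argument shows, with no appeal to soficity, that if the $\GreenJ$-class of $1$ contains a non-unit then there exist $x, y \in M$ with $xy = 1$ but $x$ not a unit, and that such elements generate a submonoid isomorphic to the bicyclic monoid.

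The crux --- and the one genuinely new ingredient here --- is to note that a left or right cancellative monoid cannot contain a copy of the bicyclic monoid. Indeed, any submonoid of a (left or right) cancellative monoid is again (left or right) cancellative, whereas the bicyclic monoid $\langle p, q \mid pq = 1 \rangle$ is neither: from $pq = 1$ one computes $p \cdot (qp) = (pq)p = p = p \cdot 1$ with $qp \neq 1$, violating left cancellativity, and dually $(qp) \cdot q = q(pq) = q = 1 \cdot q$ violates right cancellativity. Hence in our setting the $\GreenJ$-class of $1$ can contain no non-unit, and therefore coincides with $G$.

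With the $\GreenJ$-class of the identity identified as the amenable, and hence sofic, group $G$, Corollary~\ref{cor_conditions}(i) applies directly and gives that $M$ is sofic. I expect the small bicyclic computation to be the only point requiring any care; everything else is bookkeeping inherited from the earlier results.
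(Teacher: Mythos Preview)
Your proposal is correct and follows essentially the same route as the paper: use the argument of Proposition~\ref{prop_groupofunits} to deduce that a non-unit in the $\GreenJ$-class of $1$ forces a bicyclic submonoid, observe that the bicyclic monoid is neither left nor right cancellative, and then invoke Corollary~\ref{cor_conditions}(i). Your explicit bicyclic computation and your remark that amenable implies sofic add a little more detail than the paper gives, but the logic is identical.
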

\begin{proof}
Let $C$ be a left or right cancellative monoid with amenable group of units.
By the same argument
as in the proof of Proposition~\ref{prop_groupofunits} above, the $\GreenJ$-class of
the identity must be the group of units. Indeed, if it wasn't then it would contain a
copy of the bicyclic monoid, which is neither left nor right cancellative, giving a
contradiction. The result now follows from condition (i) in Corollary~\ref{cor_conditions}.
\end{proof}

It follows from a recent result of Donnelly \cite[Theorem~5]{Donnelly13}
that a cancellative left or right amenable monoid has amenable group of
units. Hence, Corollary~\ref{cor_cancellative} may be viewed
as a generalisation of \cite[Proposition 4.6]{Cecc13}, which says that
cancellative one-sided amenable monoids are sofic.

The corollaries above apply Theorem~\ref{thm_main} in rather restricted cases, although this
still suffices to give many new examples of sofic monoids. The following
example comes closer to applying Theorem~\ref{thm_main} in its full generality.

\begin{example}
Let $G$ be any sofic group, and let $S$ be any set of normal subgroups
of $G$ with amenable quotients (for example, the set of all finite index normal
subgroups). Let $M$ be the monoid generated by $S$ together with the
singleton subsets of $G$ (which we view as cosets of the trivial subgroup)
under setwise multiplication.

Then $M$ is a monoid of cosets of normal subgroups of $G$. The identity
element is the trivial subgroup $\lbrace 1 \rbrace$. The units are the
singleton subsets, which we can identify with their single elements so that
the group of units is just $G$ itself. There are
no left or right units apart the singletons, so the $\GreenJ$-class of
the identity is the group $G$.

The right translation action of the group of units is just the natural
right translation action of $G$ on the set of cosets. Each orbit
under this action is the set of all cosets of some normal subgroup $H$ of $G$.
The pointwise stabiliser of this orbit in $G$ is $H$ itself, so the
quotient by the stabiliser is $G/H$. Assuming the orbit is not the group
of units,
it follows from the definition of $M$ that $H$ is a product of subgroups
of $S$, and in particular must contain
some non-trivial subgroup $K$ from $S$. Now $G/H$ is a quotient
of $G/K$, which by
assumption is amenable, so by \cite[Proposition~4.5.4]{Cecc10}, $G/H$ is
amenable. Hence, $M$ satisfies the conditions of Theorem~\ref{thm_main}
and is sofic.
\end{example}

\section{$\GreenH$-Classes and the Action of the Group of Units}\label{sec_schutz}

In this section we consider from a structural perspective how the group of
units of a monoid acts by translation on the rest of the monoid.
Specifically, we use a standard semigroup-theoretic approach of breaking
the action down into two parts using Green's $\GreenH$-relation --- an
action \textit{on each}
$\GreenH$-class, and an action \textit{on the set of} $\GreenH$-classes
--- and study how this deconstruction relates to the local amenability
property of the action, which played such an important role in
Section~\ref{sec_soficproof} above.

Let $H$ be an $\GreenH$-class of a monoid $M$, and let $\Sigma_H$ denote
the symmetric group on the set $H$, acting on $H$ from the right and
with composition therefore from left to right. Consider the set:
$$\lbrace \sigma \in \Sigma_H \mid \textrm{ there exists } m \in M \textrm{ with } h \sigma = hm \textrm{ for all } h \in H \rbrace.$$
of all permutations of $H$ which are realised by the right translation action
of $M$ on itself. In fact this set is a subgroup \cite[Theorem~2.22]{Clifford61} of $\Sigma_H$,
called the \textit{(right) Sch\"utzenberger group} of $H$. Note that if
$H$ happens to be a subgroup (in particular, if $H$ is the group of units)
then the Sch\"utzenberger group is isomorphic to the group $H$ acting on
itself by right translation. Sch\"utzenberger
groups of $\GreenH$-classes are a powerful tool for understanding the structure
of semigroups -- see \cite[Section~2.4]{Clifford61} for a full introduction.

Now let $G$ be the group of units of the monoid $M$, and consider the
action of $G$ on $M$ by right translation. It is easily seen that if
$g \in G$ and $m \in M$ then $mg \GreenR m$; in other words, the right
translation action of $G$ preserves $\GreenR$-class. Moreover, if
$m, n \in M$ with $m \GreenL n$ then clearly $mg \GreenL ng$ so the
action preserves the $\GreenL$-relation. It follows that the action
preserves the $\GreenH$-relation, and so induces a well-defined action
on the set of $\GreenH$-classes; we denote this action by $\circ$.

\begin{theorem}\label{thm_schutz}
Let $M$ be a monoid where the $\GreenJ$-class of the identity is a sofic
group $G$, and suppose that for every non-unit $\GreenD$-class $D$ of $M$, either
or both of the following conditions hold:
\begin{itemize}
\item[(i)] there are finitely many $\GreenL$-classes in $D$, and the
Sch\"utzenberger group of $D$ is amenable; or
\item[(ii)] the Sch\"utzenberger group of $D$ is finite or abelian, and
the $\circ$-action of $G$ on the set of $\GreenH$-classes in $D$ is locally
amenable.
\end{itemize}
Then $M$ is sofic.
\end{theorem}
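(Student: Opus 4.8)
The plan is to deduce Theorem~\ref{thm_schutz} from Theorem~\ref{thm_main} by verifying that the hypotheses given here imply that the right translation action of $G$ on $M \setminus G$ is locally amenable. Since an orbit of the right translation action is contained in a single $\GreenR$-class (as noted in the proof of Corollary~\ref{cor_conditions}), and each $\GreenR$-class lies in a single $\GreenD$-class, it suffices to show that for each non-unit $\GreenD$-class $D$ and each orbit $O \subseteq D$, the quotient of $G$ by the pointwise stabiliser of $O$ is amenable. I would fix such a $D$ and $O$ and handle the two cases separately.

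**First I would treat case (ii)**, which I expect aligns most transparently with the $\GreenH$-decomposition described just before the statement. Here the idea is to factor the stabiliser computation through the two induced actions. Given $g \in G$, the element $g$ fixes the orbit $O$ pointwise only if $g$ first fixes the relevant $\GreenH$-classes under the $\circ$-action and then fixes each point within its $\GreenH$-class. I would set up a short exact sequence: let $N$ be the pointwise stabiliser (in $G$) of the set of $\GreenH$-classes met by $O$, so that $G/N$ embeds in the image of the locally amenable $\circ$-action and is therefore amenable by hypothesis. The kernel $N$ then acts on each $\GreenH$-class within the orbit, and this action is realised inside the Sch\"utzenberger group of $D$; so $N$ modulo the pointwise stabiliser of $O$ is (a subgroup of a finite product of copies of) the Sch\"utzenberger group, which is finite or abelian and hence amenable. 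The desired quotient $G/\mathrm{Stab}(O)$ is then an extension of an amenable group by an amenable group, hence amenable by \cite[Proposition~4.5.5]{Cecc10} (closure of amenability under extensions).

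**Next I would treat case (i)**, where there are finitely many $\GreenL$-classes in $D$ and the Sch\"utzenberger group is amenable. Since the right translation action preserves $\GreenR$-class, an orbit lies within a single $\GreenR$-class, and within that $\GreenR$-class the $\GreenH$-classes are indexed by the (finitely many) $\GreenL$-classes of $D$; hence the orbit meets only finitely many $\GreenH$-classes, so the image of the $\circ$-action on these is a finite group, which is amenable. The same extension argument as in case (ii) then applies, using amenability of the Sch\"utzenberger group for the kernel part. In both cases the conclusion is that $G/\mathrm{Stab}(O)$ is amenable, the action is locally amenable, and Theorem~\ref{thm_main} gives soficity.

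**The main obstacle** I anticipate is making precise the claim that the action of the stabiliser-of-$\GreenH$-classes on each $\GreenH$-class within the orbit is captured by the Sch\"utzenberger group, and that combining the actions on the finitely many $\GreenH$-classes met by the orbit yields a group embedding into a finite direct power of the Sch\"utzenberger group (so that finiteness or abelianness — both preserved under finite products and subgroups — is inherited). This requires carefully relating the abstract Sch\"utzenberger group of $D$ to the concrete permutations induced by right multiplication by elements of $G$ on each $\GreenH$-class, and checking compatibility across distinct $\GreenH$-classes in the orbit. Everything else reduces to the standard closure properties of amenable groups under subgroups, quotients, finite products, and extensions.
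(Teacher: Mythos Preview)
Your overall strategy---reduce to Theorem~\ref{thm_main} by proving local amenability via the extension $G/\mathrm{Stab}(O)$ built from the $\circ$-action and the Sch\"utzenberger action---is exactly the paper's approach. However, there is a genuine gap in your handling of case~(ii).

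You assert that $N/\mathrm{Stab}(O)$ embeds in a \emph{finite} direct power of the Sch\"utzenberger group, because the orbit meets ``finitely many $\GreenH$-classes''. This is justified only in case~(i), where the finiteness of the set of $\GreenL$-classes forces it. In case~(ii) nothing prevents the orbit $O$ from meeting infinitely many $\GreenH$-classes: the $\circ$-action is only assumed locally amenable, not to have finite orbits. The embedding is therefore into a possibly \emph{infinite} direct power $S^n$, and infinite products of amenable groups need not be amenable. This is precisely why the hypothesis in~(ii) is ``finite or abelian'' rather than merely ``amenable'': if $S$ is finite then $S^n$ is locally finite, and if $S$ is abelian then $S^n$ is abelian, and in either case $S^n$ is amenable regardless of the cardinality $n$. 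Your remark that finiteness and abelianness are ``preserved under finite products'' is true but misses the point---what is actually needed is that they yield amenability of \emph{arbitrary} direct powers. Once you adjust for this, the argument goes through as in the paper.
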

\begin{proof}
We shall show that the right translation action of $G$ on $M \setminus G$ is
locally amenable, so that the result follows from Theorem~\ref{thm_main}.
Let $x$ be an element of $M \setminus G$, and let $X$ be its orbit under
the right translation action of $G$. We wish to show that the quotient of
$G$ by the pointwise stabiliser of $X$ is amenable.

Let $Z$ be the union of the $\GreenH$-classes of elements in $X$.
Notice that because the right translation action of $G$ preserves
$\GreenR$-class, $X$ is contained in an $\GreenR$-class. Since
each $\GreenR$-class is a union of $\GreenH$-classes, $Z$ is also
contained in a single $\GreenR$-class, and hence also a single
$\GreenD$-class.

Now since the right translation action of $G$ preserves the
$\GreenH$-relation, $Z$ is also a union of orbits under this action.
It follows that the pointwise stabiliser of $Z$ under this action is
a normal subgroup of $G$; call it $Q$.

Now $X$ is contained in $Z$, so the pointwise stabiliser of $X$
contains that of $Z$, so the quotient of $G$ by the pointwise stabiliser
of $X$ is a quotient of $G/Q$. Since a quotient of an amenable group
is amenable \cite[Proposition~4.5.4]{Cecc10}, it will suffice to show
that $G/Q$ is amenable.

Let
$$P = \lbrace g \in G \mid h g \GreenH h \textrm{ for all } h \in Z \rbrace.$$
It is immediate from the definition that $P$ is the pointwise stabiliser of
the set of $\GreenH$-classes in $Z$, under the $\circ$-action of $G$. It
follows from the definition of $Z$ that this set is an orbit under the
$\circ$-action. Hence, $P$ is a normal subgroup of $G$.

Moreover, any element of $G$ stabilising $Z$ pointwise under the right
translation action must also stabilise the
$\GreenH$-classes in $Z$ under the $\circ$-action, which means
$Q \leq P \leq G$. It follows that
$G/Q$ is an extension of the quotient $G/P$ by the subgroup $P/Q$.
Since an extension of an
amenable group by an amenable group is amenable \cite[Proposition~4.5.5]{Cecc10},
it will thus suffice for the theorem to show that $G/P$ and $P/Q$ are both
amenable.

Consider first $G/P$. By assumption, the $\GreenD$-class containing $Z$ satisfies
either condition (i) or condition (ii) from the statement of the theorem.
In case (i), since $Z$ is contained in an
$\GreenR$-class, and each $\GreenH$-class is the intersection of an
$\GreenR$-class and an $\GreenL$-class, $Z$ can contain only finitely
many $\GreenH$-classes. It is
immediate from the definition of $P$ that $G/P$ acts faithfully on the set
of such, so it must be finite and hence amenable by \cite[Proposition~4.4.6]{Cecc10}.
 In case (ii),
the $\circ$-action is locally amenable so $G/P$, being a quotient
by the pointwise stabiliser
of an orbit, is amenable.

Next, we show that $P/Q$ is amenable. For each $\GreenH$-class $H$ in $Z$,
and each $p \in P$, let $p_H \in \Sigma_H$ denote the permutation of $H$
induced by the right translation action of $p$. Since $p \in G \subseteq M$, it is
immediate from the definition of the Sch\"utzenberger group that $p_H$ lies
in the Sch\"utzenberger group of $H$. Thus the map $p \mapsto p_H$ gives
a morphism from $P$ to the Sch\"utzenberger group. Together, these maps
induce a morphism from $P$ to the direct product of all the
Sch\"utzenberger groups of $\GreenH$-classes in $Z$. In fact, these
Sch\"utzenberger groups are all isomorphic \cite[Theorem~2.25]{Clifford61},
so it may be
viewed as a morphism from $P$ to the direct power $S^n$, where $S$ is
isomorphic to the Sch\"utzenberger groups and $n$ is the (possibly
infinite) cardinality of the set of $\GreenH$-classes in $Z$.

Notice that this map sends $p \in P$ to the identity if and only if $p$
fixes every element of every $\GreenH$-class in $Z$, that is, if $p$
fixes every element of $Z$. In other words,
the kernel of this map is exactly the subgroup $Q$. So the map induces an
embedding of the quotient $P/Q$ into $S^n$.

Again, either condition (i) or condition (ii) from the
statement of the theorem applies to the $\GreenD$-class containing $Z$.
In case (i), $n$ is finite and $S$ is amenable, and so by \cite[Corollary 4.5.6]{Cecc10},
$S^n$ is amenable.
In case (ii), the assumptions on $S$ are sufficient
to ensure that $S^n$ is amenable even if $n$ is infinite. Indeed, if $S$
is abelian then $S^n$ is abelian, and hence amenable by
\cite[Theorem~4.6.1]{Cecc10}. If $S$ is finite then $S^n$ is locally
finite, and hence amenable by \cite[Corollary 4.5.12]{Cecc10}. In all
cases, it follows that $P/Q$, being isomorphic to a subgroup of $S^n$, is
amenable by \cite[Proposition~4.5.1]{Cecc10}.
\end{proof}

A case of particular interest is where condition (i) applies to \textit{every}
$\GreenD$-class, that is, where each $\GreenD$-class contains only
finitely many $\GreenL$-classes. As well as ensuring that
the action of $G$ on the set of $\GreenH$-classes is locally amenable,
this condition also automatically rules out the presence of a bicyclic
submonoid, yielding a simpler statement:

\begin{corollary}
Let $M$ be a monoid with sofic group of units, all non-unit Sch\"utzenberger
groups amenable and finitely many $\GreenL$-classes in each $\GreenD$-class.
Then $M$ is sofic.
\end{corollary}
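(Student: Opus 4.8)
The plan is to deduce the result directly from Theorem~\ref{thm_schutz}, by verifying that condition~(i) of that theorem applies to \emph{every} non-unit $\GreenD$-class. The hypotheses hand us this almost for free: each such $\GreenD$-class has only finitely many $\GreenL$-classes and an amenable Sch\"utzenberger group, so condition~(i) holds throughout. The one point requiring work is that Theorem~\ref{thm_schutz} assumes the $\GreenJ$-class of the identity \emph{coincides} with a sofic group $G$, whereas our hypothesis only asserts that the \emph{group of units} is sofic. So the real content of the proof is to show that the finiteness condition on $\GreenL$-classes forces the $\GreenJ$-class of $1$ to contain no non-units, i.e.\ to equal the group of units.

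To establish this I would reproduce the first part of the proof of Proposition~\ref{prop_groupofunits}: if the $\GreenJ$-class of $1$ contained a non-unit, then there would exist $x,y \in M$ with $xy = 1$ but $x$ not a unit, and $x,y$ would generate a submonoid of $M$ isomorphic to the bicyclic monoid. (This portion of that argument is purely semigroup-theoretic and does not use soficity, exactly as exploited in the proof of Corollary~\ref{cor_cancellative}.) I would then derive a contradiction by producing infinitely many $\GreenL$-classes inside a single $\GreenD$-class. The natural candidates are the powers $x^0, x^1, x^2, \dots$; since $xy=1$ gives $x^n y^n = 1$ for every $n$, each $x^n$ satisfies $x^n M = M = 1 \cdot M$, so every $x^n$ is $\GreenR$-related to $1$ and hence lies in the $\GreenD$-class of $1$.

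The heart of the matter, and the step I expect to be the main obstacle, is to show that these powers lie in pairwise distinct $\GreenL$-classes \emph{of $M$}. The difficulty is that Green's relations computed inside the bicyclic submonoid need not agree with those of the ambient monoid, so one cannot simply invoke the known $\GreenL$-structure of the bicyclic monoid. Instead I would argue by cancellation: for $m < n$, if $x^m = a x^n$ for some $a \in M$, then right-multiplying by $y^n$ and using $x^n y^n = 1$ forces $a = x^m y^n = y^{n-m}$, hence $x^m = y^{n-m} x^n$. But under the isomorphism with the bicyclic monoid these two elements have distinct normal forms, so they are distinct in $M$; this contradiction shows $x^m \notin M x^n$, whence $M x^m \neq M x^n$ and $x^m \not\GreenL x^n$. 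Thus the $\GreenD$-class of $1$, which contains the non-unit $x$, would have infinitely many $\GreenL$-classes, contradicting the hypothesis.

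With the $\GreenJ$-class of $1$ now known to equal the group of units $G$, which is sofic by assumption, the hypotheses of Theorem~\ref{thm_schutz} are met via condition~(i) at every non-unit $\GreenD$-class, and the conclusion that $M$ is sofic follows immediately.
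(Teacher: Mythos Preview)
Your proposal is correct and follows essentially the same strategy as the paper: reduce to Theorem~\ref{thm_schutz} via condition~(i), with the only substantive step being to show the $\GreenJ$-class of $1$ equals the group of units by ruling out a bicyclic submonoid using the $\GreenL$-finiteness hypothesis. The paper derives the contradiction in the contrapositive direction---pigeonhole gives $p^i \GreenL p^j$, and then the right-congruence property of $\GreenL$ yields $1 \GreenL p^{j-i}$, forcing $p$ to have a left inverse---whereas you compute directly that distinct powers $x^m$ lie in distinct $\GreenL$-classes; both arguments are short and equivalent in spirit.
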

\begin{proof}
To apply Theorem~\ref{thm_schutz}, all we need is to show that the
$\GreenJ$-class of the identity is the group
of units. Suppose not. 
Then by the argument in the proof of Proposition~\ref{prop_groupofunits},
$M$ admits a submonoid isomorphic to the bicyclic monoid, say generated by
elements
$p$ and $q$ with $pq = 1$. Since each $\GreenD$-class has finitely many
$\GreenL$-classes and elements of the bicyclic monoid are certainly
$\GreenD$-related, by the pigeon-hole principle there must exist distinct
natural numbers $i < j$ with $p^i \GreenL p^j$. Since $\GreenL$ is
a right congruence, we get
$$1 \ = \ p^i q^i \ \GreenL \ p^j q^i \ = \ p^{j-i}.$$
Thus, there is an element $x \in M$ with $x p^{j-i} = 1$, in other words,
$p$ is a right unit. But this contradicts the fact that $p$ is not
cancellable on the left in the bicyclic monoid.
\end{proof}

Recall that a monoid $M$ is called \textit{regular} if for every
$x \in M$ there exists an element $y \in M$ with $xyx = x$. The class of regular
monoids includes in particular all inverse monoids. In a regular
monoid every Sch\"utzenberger group arises as a (maximal) subgroup
around some idempotent (this follows from \cite[Theorems~2.22 and 2.24]{Clifford61}
and the elementary fact that every $\GreenD$-class of a regular semigroup
contains an idempotent, and hence a maximal subgroup). Thus, Theorem~\ref{thm_schutz}
has the following immediate corollary in this case:

\begin{corollary}
Let $M$ be a regular monoid with sofic group of units, all non-unit
subgroups amenable and finitely many $\GreenL$-classes in each
$\GreenD$-class. Then $M$ is sofic.
\end{corollary}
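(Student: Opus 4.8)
The plan is to reduce this statement to the preceding corollary, whose hypotheses differ only in requiring the Sch\"utzenberger groups (rather than the subgroups) of the non-unit $\GreenD$-classes to be amenable. The bridge is the structural fact recalled immediately before the statement: in a regular monoid every Sch\"utzenberger group is isomorphic to a maximal subgroup sitting around some idempotent of the corresponding $\GreenD$-class.

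First I would fix a non-unit $\GreenD$-class $D$. By that structural fact (combining \cite[Theorems~2.22 and 2.24]{Clifford61} with the existence of an idempotent in every $\GreenD$-class of a regular monoid), the Sch\"utzenberger group of $D$ is isomorphic to the maximal subgroup $H_e$ at some idempotent $e$ of $D$. Here I would observe that $e$ must itself be a non-unit: since $D$ is a non-unit $\GreenD$-class all of its elements are non-units, and $H_e$, being the $\GreenH$-class of $e$, lies entirely within $D$. Hence $H_e$ is a non-unit subgroup of $M$.

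Next, by hypothesis every non-unit subgroup of $M$ is amenable, so $H_e$ --- and therefore the Sch\"utzenberger group of $D$, which is isomorphic to it --- is amenable. Since $D$ was an arbitrary non-unit $\GreenD$-class, this shows that all non-unit Sch\"utzenberger groups of $M$ are amenable. The remaining hypotheses, namely that the group of units is sofic and that each $\GreenD$-class contains only finitely many $\GreenL$-classes, are identical in the two statements, so the previous corollary applies directly and yields that $M$ is sofic.

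There is essentially no serious obstacle here, since the real content has already been carried out in the preceding corollary and in the quoted representation of Sch\"utzenberger groups as maximal subgroups. The only point requiring a moment's care is confirming that the idempotent supplied by the structural fact lies in the non-unit $\GreenD$-class $D$, and so is itself a non-unit, so that its maximal subgroup genuinely qualifies as a ``non-unit subgroup'' to which the amenability hypothesis applies; this is immediate once one recalls that a Sch\"utzenberger group is attached to a single $\GreenH$-class and that a maximal subgroup lies within its own $\GreenD$-class.
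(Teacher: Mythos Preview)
Your proposal is correct and follows essentially the same route as the paper: the paper's ``proof'' is the paragraph immediately preceding the corollary, which records the structural fact that in a regular monoid every Sch\"utzenberger group arises as a maximal subgroup around an idempotent, and then declares the result an immediate consequence. The only cosmetic difference is that the paper phrases this as a consequence of Theorem~\ref{thm_schutz}, whereas you (arguably more precisely) invoke the preceding corollary, which already packages the needed verification that the $\GreenJ$-class of the identity is the group of units.
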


\section{Remarks and Open Questions}\label{sec_remarks}

We consider the extent to which the hypotheses in Theorem~\ref{thm_main}
are necessary. The requirement that the $\GreenJ$-class of the identity
be a sofic group is necessary, by Proposition~\ref{prop_groupofunits}.
However, it is unclear to what extent the remaining hypotheses are
essential. The conditions certainly do not give an exact characterisation
of sofic monoids, since the hypotheses do not apply in general to residually
finite monoids.

\begin{example} Let $F$ be any group which is residually finite but not
amenable (for example, a free group of rank at least $2$). Let
$S = \lbrace 0,1 \rbrace$ be the \textit{$2$-element semilattice} (the
monoid with multiplication given by $1 1 = 1$ and $1 0 = 0 1 = 0 0 = 0$).
Then the direct product $F \times S$
is easily seen to be a residually finite monoid,
and hence by \cite[Corollary 4.2]{Cecc13} a sofic monoid. However, the
(non-amenable) group of units $F \times \lbrace 1 \rbrace$, which is
isomorphic to $F$, acts faithfully
and transitively on $F \times \lbrace 0 \rbrace$ (the rest of the monoid).
In other words, $F \times \lbrace 0 \rbrace$ is a single orbit with trivial
pointwise stabiliser, so the corresponding quotient is again isomorphic to
$F$ and hence non-amenable. Thus, the hypotheses of Theorem~\ref{thm_main}
are not satisfied.
\end{example}

Sofic groups themselves having been introduced as a generalisation of
amenable groups, it is natural to ask if the hypothesis of amenability
in Theorem~\ref{thm_main} can be replaced with the weaker hypothesis
of soficity.

\begin{question}\label{qn_soficsuffices1}
Let $M$ be a monoid in which the $\GreenJ$-class of the identity is a
sofic group $G$, and such that quotients of $G$ by orbit stabilisers
under the right translation action on $M$ are all sofic. Is $M$ necessarily sofic?
\end{question}

Indeed, it could even be that soficity of $M$ results from soficity
of $G$, without the necessity of considering the action on $M \setminus G$.
\begin{question}\label{qn_soficsuffices2}
Let $M$ be a monoid in which the $\GreenJ$-class of the identity is a
sofic group. Is $M$ necessarily sofic?
\end{question}
A positive answer to this question would, combined with 
Proposition~\ref{prop_groupofunits} above, completely describe
sofic monoids modulo the case of sofic groups.

A natural first step to answering either of the above questions is to 
consider the case where group of units and/or its quotients by orbit
stabilisers are residually finite but not amenable.

An interesting feature of Theorem~\ref{thm_main} (presaged by
\cite[Proposition~4.7]{Cecc13}) is that the hypotheses concerns
only the group of units and its right translation action on the
monoid: the internal multiplication of the non-unit elements plays
no role. It is natural to ask if this is only a feature of the theorem,
or if it is an inherent property of sofic monoids:
\begin{question}\label{qn_actiononly}
Let $M$ and $N$ be monoids, and assume that the $\GreenJ$-class of
the identity in each is a group. Suppose there is a bijection
$\rho : M \to N$ which restricts to an isomorphism between the groups
of units, and such that
$\rho(st) = \rho(s)\rho(t)$ whenever $t$ is a unit in $M$. If $M$ is
sofic, must $N$ also be sofic?
\end{question}

Note that a positive answer to Question~\ref{qn_soficsuffices1} would
also entail a positive answer to Question~\ref{qn_actiononly}. A
positive answer to Question~\ref{qn_soficsuffices2} would imply the
even stronger statement that soficity for monoids only ever depends
on the structure of the $\GreenJ$-class of the identity, and not even
on its action on the rest of the monoid.

Irrespective of the answer to Question~\ref{qn_actiononly}, 
\cite[Proposition~4.7]{Cecc13} (and also condition (i) of our
Corollary~\ref{cor_conditions}) implies that the soficity condition
imposes no restriction whatsoever on the
internal complexity of the monoid outside the group of units. This
suggests that soficity of monoids, as studied here, is only likely
to be of interest in applications where the group of units plays a
fundamental role.

If seeking applications in semigroup theory more widely, one is drawn to
ask if there is an alternative, probably stronger,
definition of a sofic monoid which also generalises sofic groups but
exerts more control on the internal structure of the rest of the
monoid. A natural test of whether a definition is satisfactory in
this respect would be whether the resulting class is closed under the
taking of
monoid subsemigroups (that is, subsets closed under the multiplication
and with an identity element which is not necessarily the identity
of the containing monoid). Such a definition, if found, is also likely
to extend naturally to semigroups without an identity element.

\bibliographystyle{plain}

\def\cprime{$'$} \def\cprime{$'$}

\end{document}